\documentclass[12pt]{amsart}
\usepackage{amsmath,amsthm,amssymb,bbm,dsfont}
\usepackage{graphicx}
\usepackage{subfig}
\usepackage{cite,float}

\numberwithin{equation}{section}
\newtheorem{theorem}{Theorem}[section]
\newtheorem{lemma}[theorem]{Lemma}
\newtheorem{result}[theorem]{Result}

\newtheorem{proposition}[theorem]{Proposition}
\theoremstyle{remark}

\makeatletter
\@namedef{subjclassname@2010}{%
	\textup{2020} Mathematics Subject Classification}
\makeatother
\begin{document}
		\title{On the Kobayashi isometries of a class of 2-dimensional Lempert 
			manifolds}
	
	\author[N. Gupta]{Naveen Gupta}
	\address{Department of Mathematics, Netaji Subhas University of Technology,
		Delhi--110 078, India}
	\email{naveen.gupta@nsut.ac.in, ssguptanaveen@gmail.com}

	\begin{abstract}
		We prove that every Kobayashi distance isometry between $2$-dimensional 
		Lempert manifolds having finite universal set (with atleast three elements) is 
		holomorphic or anti-holomorphic. It is shown that every such isometry is 
		$C_{loc}^{1,1}$ without any regularity assumption on the isometry. 
	\end{abstract}
	\keywords{Isometries with respect to invariant distances and metrics; Carath\'{e}odory 
		universal set.}
	\subjclass[2010]{32F45, 32H02}
	\maketitle

	\section{Introduction}
	
	Let $M,\, N$ be two Lempert manifolds with finite Carath\`{e}odory universal sets. Chavan 
	and Zwonek \cite[~Theorem 2]{cha-zwo} proved the following result:
	\begin{result}\label{res:cha-zwo}
		Let $M,N$ be $2$-dimensional Lempert manifolds such that
		their Carath\`{e}odory universal set is finite but has at least three elements.
		Let $F : M \to N$ be a $C^1$-smooth Kobayashi metric isometry. Then $F$
		is (anti)holomorphic.
	\end{result}

For a $C^1$ Kobayashi distance  $F$ it is follows that $F$ is $C^1$ 
Kobayashi metric isometry and hence Chavan and Zwonek's result proves  that $F$ is holomorphic 
or anti-holomorphic. We prove 
the following theorem to establish that the $C^1$ assumption is not required here.

\begin{theorem}\label{thm:main}
	Every Kobayashi distance isometry between $2$-dimensional Lempert manifolds with 
	finite  Carath\`{e}odory universal set with atleast three elements 
	 is either holomorphic or anti-holomorphic.
\end{theorem}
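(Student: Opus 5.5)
The plan is to reduce Theorem~\ref{thm:main} to Result~\ref{res:cha-zwo}. That result needs two things: the map must be $C^1$, and it must preserve the Kobayashi metric. So it suffices to show that a Kobayashi distance isometry $F:M\to N$ is $C^1$, and in fact $C^{1,1}_{loc}$ as the abstract claims; preservation of the metric then follows from the distance being preserved.

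Setup: local structure of $k_M$.
\begin{itemize}
\item Let $M$ be a $2$-dimensional Lempert manifold whose Carath\'eodory universal set $\{f_1,\dots,f_m\}$ is finite with $m\ge 3$. Each $f_j:M\to\mathbb{D}$ is holomorphic, and
\[
k_M(z,w)=\max_{1\le j\le m} p\bigl(f_j(z),f_j(w)\bigr),
\]
where $p$ is the Poincar\'e distance.
\item Since $M$ is $2$-dimensional and $m\ge3$, through a generic point any two of the maps, say $(f_i,f_j)$, form local holomorphic coordinates.
\item In such coordinates $k_M$ is locally a max of finitely many smooth ``Poincar\'e-type'' functions. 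Its Kobayashi--Royden metric is
\[
\kappa_M(z;v)=\max_j \gamma\bigl(f_j(z);f_j'(z)v\bigr),
\]
a max of norms of finitely many complex-linear functionals. Its unit ball is therefore a polytope-like convex body cut out by complex lines.
\end{itemize}

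Step 1: regularity of $F$.
\begin{itemize}
\item $F$ is a homeomorphism, since it is bijective and preserves distance. It is also locally bi-Lipschitz with respect to any Riemannian distance, because $k_M$ is locally comparable to the Euclidean distance.
\item By Rademacher's theorem $F$ is differentiable almost everywhere. At each point of differentiability, $dF_z$ is a real-linear isometry from $(T_zM,\kappa_M)$ to $(T_{F(z)}N,\kappa_N)$. This holds because $\kappa_M(z;v)=\lim_{t\to0}k_M(z,z+tv)/t$ for these manifolds, which is a Lempert/Venturini-type fact.
\item To go beyond almost-everywhere differentiability, the plan is to use that the complex geodesics are explicit: they are the maps $\varphi$ with $f_j\circ\varphi$ an automorphism of $\mathbb{D}$ for the active index $j$.
\item $F$ maps real geodesics (segments of Kobayashi geodesics) to real geodesics.
\item Because the universal set is finite, through each point and in each direction there are enough geodesics that are uniquely determined, namely those in ``extremal directions'' where a single $f_j$ attains the max. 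These geodesics depend smoothly, in fact real-analytically, on their endpoint data.
\end{itemize}

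Step 2: the main obstacle, continuity of the derivative.
\begin{itemize}
\item Preserving these unique geodesics forces $F$ to send the level-set families $\{f_j=\text{const}\}$ of $M$ to the analogous families in $N$.
\item The plan is to show $F$ maps fibers of $f_j$ to fibers of some $g_{\sigma(j)}$, with a permutation $\sigma$ that is locally constant.
\item With the fibers matched, $g_{\sigma(j)}\circ F$ is constant on the fibers of $f_j$. So it factors as $h_j\circ f_j$, where $h_j$ is a Poincar\'e isometry of $\mathbb{D}$, hence a M\"obius or anti-M\"obius map.
\item Then, in the local coordinates $(f_i,f_j)$, $F$ has the form $(h_i,h_j)$ up to the coordinate change. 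This gives $C^{1,1}_{loc}$ regularity, in fact smoothness away from a thin set.
\item The delicate points are the following:
  \begin{itemize}
  \item showing the fiber-matching holds everywhere, not just almost everywhere;
  \item handling points where three or more $f_j$ are simultaneously active;
  \item ensuring the linear isometries $dF_z$ of the polytopal norms vary continuously. I would use that such isometries form a finite set modulo conjugation, so a measurable choice that is locally constant almost everywhere is actually constant.
  \end{itemize}
\end{itemize}

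Step 3: conclusion.
\begin{itemize}
\item Once $F$ is $C^1$, the almost-everywhere identity $\kappa_N(F(z);dF_z v)=\kappa_M(z;v)$ holds everywhere, by continuity of $\kappa$ and of $dF$.
\item Hence $F$ is a $C^1$ Kobayashi metric isometry, and Result~\ref{res:cha-zwo} gives that $F$ is holomorphic or anti-holomorphic.
\end{itemize}
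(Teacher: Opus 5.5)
Your reduction is the same as the paper's: prove $F$ is $C^1$, deduce that it preserves $\varkappa$, then invoke Result~\ref{res:cha-zwo}. The gap is Step~2. It carries all the content beyond Result~\ref{res:cha-zwo}, it is not established, and the mechanisms you propose for it fail.

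\textbf{Where Step~2 breaks.}
\begin{itemize}
\item \emph{Geodesics are not unique in single-active directions.} The norms $\varkappa_M(z;\cdot)=\max_j|f_j'(z)\,\cdot\,|/(1-|f_j(z)|^2)$ are not strictly convex. In directions where a single $f_j$ is active, shortest curves and complex geodesics are highly non-unique, not unique. Already in the bidisc, where $k=\max\bigl(\rho(z_1,w_1),\rho(z_2,w_2)\bigr)$, take $\gamma_1$ the unit-speed Poincar\'e geodesic from $0$ to $a$ and $\gamma_2$ any loop at $0$ that is $1$-Lipschitz for $\rho$. Then $(\gamma_1,\gamma_2)$ is a shortest path from $(0,0)$ to $(a,0)$. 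The same perturbation along the fibres of $f_j$ works in general whenever one $f_j$ is strictly active. Uniqueness only appears in ``corner'' directions, where several $f_j$ are active at once.
\item \emph{The fibre-matching claim has no foundation.} It rests on the uniqueness above. The factorization $g_{\sigma(j)}\circ F=h_j\circ f_j$ with $h_j$ a Poincar\'e isometry would in addition require knowing which index realizes the maximum for every pair of points.
\item \emph{The linear isometries are not a finite set.} The map $v\mapsto e^{i\theta}v$ always preserves $\varkappa_M(z;\cdot)$, so the linear isometries $T_zM\to T_{F(z)}N$ form an infinite set.
\item \emph{A.e.\ constancy does not follow.} An a.e.-defined measurable $dF$ with values in a small set is not automatically locally constant. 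That would need a genuine rigidity theorem for Lipschitz maps, of Ball--James type, and such results fail when rank-one connections are present.
\end{itemize}
The ``delicate points'' you list are exactly the unproved part of the theorem.

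\textbf{The missing idea.} No geodesic analysis is needed.
\begin{itemize}
\item Because the universal set is finite, $\gamma_M(p;X)=\max_j|f_j'(p)X|/(1-|f_j(p)|^2)$ is a maximum of finitely many locally Lipschitz functions on $TM$.
\item Hence $\gamma_M=\varkappa_M$ is a $C^{0,1}_{\text{loc}}$ Finsler metric, a norm on each fibre by tautness, and its length distance is $k_M$.
\item The Myers--Steenrod theorem for low-regularity Finsler metrics of Matveev--Troyanov (their Corollary C with $k=0$, $\alpha=1$) then says any distance-preserving bijection between such manifolds is $C^{1,1}_{\text{loc}}$.
\item Bijectivity of $F$, which you assumed, follows from injectivity, invariance of domain and completeness of $k_M$.
\end{itemize}
Their proof reduces to the Riemannian low-regularity case through the canonically attached Binet--Legendre metric, which inherits the $C^{0,1}$ regularity. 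This is what sidesteps the non-strict convexity that derails your Step~2. Once $F\in C^1$, your Step~3 completes the argument.
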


The proof follows the idea presented in \cite{Edigarian}, in which the author proves that 
the $C^1$ assumption in \cite[~Theorem 1]{cha-zwo1} is superflous.
 
% We prove the following proposition to prove the above stated theorem.

	\section{preliminaries and notations}

We begin by introducing some basic concepts and a few results associated to them. 
These will help us in discussing the proof of the main 
results.

Let $M$ be a complex manifold. A Finsler metric on $M$ is a continuous function 
$F:TM\to [0,\infty)$ whose restriction to each tangent space $T_pM$ is a Minkowski norm.
In Finsler geometry, $F$ is required to be $C^2$. But Matveev-Trayonov studied Finsler 
metrics of low regularity. A Finsler metric $F$ is $C^{k,\alpha}_{loc}$ ($k\in \mathbb{N},
\, 0\leq \alpha\leq 1$) if in local co-ordinates it is of class $C^{k,\alpha}_{loc}.$

Carath\'{e}odory and Kobayashi distance on $M$ are denoted by $c_M,\, k_M$, whereas 
their infinitesimal form Carath\'{e}odory-Rieffen and Kobayashi-Royden metric are 
denoted by $\gamma_M, \varkappa_M$ respectively.

A connected complex manifold $M$ is said to be Lempert if it is Taut and the equality
$$c_M\equiv l_M\equiv k_M,\, \gamma_M\equiv \varkappa_M$$ holds, where $l_M$ denotes
the Lempert function on $M.$ It is an immediate consequence of the definition that 
a Lempert manifold is complete with the Kobayashi distance (or equivalently 
Carath\'{e}odory distance).

% We denote by $\mathbb{D}$ the unit disc in the 
%complex plane $\mathbb{C}$ and the P\'{o}incare distance on $D$ is denoted by $\rho.$ 
For two Lempert manifolds $M,N$, a map $F:M\to N$  is said to be Kobayashi distance isometry if 
$$k_M(z,w)=k_{N}(F(z),F(w))$$ for all $z,w\in M.$

A $C^1$ map $F:M\to N$ is said to be Kobayashi metric isometry if 
$$\varkappa_M(z,w)=\varkappa_N(F(z),F(w))$$ for all $z,w\in M$. 

Let us quickly recall the definition of Carath\'{e}odory distance on $M$. For any 
$z,w\in M$, $$c_M(z,w):=\sup_{f\in \mathcal{F}}\{\rho(F(z),F(w))\},$$ 
where $\mathcal{F}$ consists of holomorphic maps from $M$ to the unit disc $\mathbb{D}$
in the complex plane and $\rho$ denotes the P\'{o}incare distance on $\mathbb{D}$. 
The smallest subset $\mathcal{F}_M$ of holomorphic maps $f:M\to \mathbb{D}$ such that 
above stated expression for $c_M$ holds is said to be Carath\'{e}odory universal set. 
In case, $F_M$ consists of finite number of functions, it is said to be finite 
Carath\'{e}odory universal set.

\section{Proof of the main result}

We begin with the proof of the following key lemma.

\begin{lemma}\label{lem:connect}
Let $M$ be a Lempert manifold whose Carath\`{e}odory universal set 
$\mathcal{F}=\{f_1,f_2,\ldots,f_m\}$ is finite. Then the  Carath\`{e}odory-Rieffen 
metric $\gamma_M:TM\to [0,\infty)$ is $C_{\text{loc}}^{0,1}$.
\end{lemma}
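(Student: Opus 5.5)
The plan is to obtain an explicit formula for $\gamma_M$ in terms of the finitely many functions $f_1,\dots,f_m$, and then read off local Lipschitz continuity from that formula. The key identity is
\begin{equation*}
\gamma_M(z;X)=\max_{1\le j\le m}\frac{|df_j(z)X|}{1-|f_j(z)|^2},\qquad z\in M,\ X\in T_zM .
\end{equation*}
The inequality ``$\ge$'' is immediate from the definition of $\gamma_M$ as a supremum over holomorphic maps into $\mathbb{D}$.

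For ``$\le$'' I will differentiate the distance formula. Fix $z$ and $X$. Since $M$ is Lempert, $\gamma_M=\varkappa_M$, so there is a complex geodesic, or at least an extremal disc, $\varphi:\mathbb{D}\to M$ with $\varphi(0)=z$ and $\varphi'(0)=\lambda X$. Along it, $k_M(\varphi(0),\varphi(t))=\rho(0,t)$. The Carath\'eodory identity $c_M=k_M$ together with finiteness of $\mathcal F$ gives
\begin{equation*}
\rho(0,t)=\max_j \rho\bigl(f_j(z),f_j(\varphi(t))\bigr).
\end{equation*}
Take $t\to 0$ along a sequence and pass to a subsequence on which one index $j$ realises the maximum; this is where finiteness is used. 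Dividing by $|t|$ yields $\gamma_M(z;X)\le \frac{|df_j(z)X|}{1-|f_j(z)|^2}$. Alternatively, one can avoid geodesics by the standard fact that $\gamma_M$ is the infinitesimal form of $c_M$ for Lempert manifolds: $\gamma_M(z;X)=\lim_{t\to0}c_M(z,z+tX)/|t|$ in local coordinates. The same subsequence argument on the finite maximum then gives the formula. I expect justifying this limit, i.e.\ that the infinitesimal version of the finite sup formula holds, to be the main obstacle. The clean route is the geodesic argument, using tautness for the existence of extremal discs and the equality $c_M=k_M=l_M$.

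Once the formula is established, the conclusion follows quickly. In local coordinates on a chart $U\times\mathbb{C}^n\subset TM$, each function
\begin{equation*}
g_j(z,X)=\frac{|df_j(z)X|}{1-|f_j(z)|^2}
\end{equation*}
is the modulus of a real-analytic function $(z,X)\mapsto df_j(z)X/(1-|f_j(z)|^2)$. The denominator is bounded away from $0$ on compacts because $f_j(M)\subset\mathbb{D}$. A modulus of a $C^1$ function is locally Lipschitz, since $\bigl||a|-|b|\bigr|\le|a-b|$, so each $g_j$ is $C^{0,1}_{loc}$. Finally, a maximum of finitely many locally Lipschitz functions is locally Lipschitz, with constant equal to the maximum of the constants on each compact set. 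Hence $\gamma_M$ is $C^{0,1}_{loc}$ on $TM$.
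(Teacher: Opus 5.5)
Your proof is correct and follows the paper's route: write $\gamma_M(z;X)=\max_j |df_j(z)X|/(1-|f_j(z)|^2)$, then note that each term is locally Lipschitz on $TM$ and that a finite maximum of locally Lipschitz functions is locally Lipschitz (you also use the correct denominator $1-|f_j|^2$, where the paper writes $1-|f_i(p)|$). The one difference is that the paper simply recalls the infinitesimal formula, whereas you justify it, and both of your routes work. In the geodesic route, the extremal disc is a geodesic because a Montel-extremal $f$ for $\gamma_M(z;X)$ composed with it is a rotation by the Schwarz lemma when $\gamma_M=\varkappa_M$. The alternative route is no real obstacle, since $\gamma_M(z;X)=\lim_{t\to 0}c_M(z,z+tX)/|t|$ holds on every complex manifold by Cauchy estimates that are uniform over holomorphic maps $M\to\mathbb{D}$.
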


\begin{proof}
	Recall that  for any $p\in M$ and $X\in T_pM$, we have 
	$$\gamma_M(p;X)=\sup_{f\in \mathcal{F}}\gamma_{\mathbb{D}}(f(p);f'(p)X).$$
	
	Since the Carath\`{e}odory universal set is finite, the expression reduces to 
		$$\gamma_M(p;X)=\max_{1\leq i\leq m}\gamma_{\mathbb{D}}(f_i(p);f_i'(p)X).$$
		
		For any fixed $1\leq i\leq m$,  consider the map 
		\begin{align*}(p,X)\to& \gamma_{\mathbb{D}}(f_i(p);f_i'(p)X)\\
			& =\dfrac{|f_i'(p)X|}{1-|f_i(p)|}.			
		\end{align*}
	Since $f_i$ is holomorphic and the image of $f_i$ lies within $\mathbb{D}$, it follows that 
	this map is locally Lipschitz. Therefore, $\gamma_M$, which is maximum of these Lipschitz 
	functions, is also Lipschitz. This proves that $\gamma_M$ is $C_{\text{loc}}^{0,1}$.
\end{proof}

We now prove the following proposition which in turn will imply Theorem \ref{thm:main}.

 \begin{proposition}\label{prop:connect}
	Every Kobayashi distance isometry between Lempert manifolds with finite 
	Carath\`{e}odory universal set is $C_{\text{loc}}^{1,1}$.
\end{proposition}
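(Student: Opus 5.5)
Following the approach of \cite{Edigarian}, which rests on the Matveev--Troyanov theory of Finsler metrics of low regularity, I will treat a Kobayashi distance isometry $F:M\to N$ as a distance-preserving map between the length spaces $(M,k_M)$ and $(N,k_N)$, and show that both length structures come from Finsler metrics regular enough for the Myers--Steenrod type theorem of Matveev--Troyanov to apply. That theorem states that a distance-preserving map between manifolds carrying $C^{0,1}_{loc}$ (locally Lipschitz) Finsler metrics, each strictly convex on every tangent space, is of class $C^{1,1}_{loc}$ (and, as a consequence, preserves the Finsler metrics).

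The key steps are as follows. First, since $M$ is Lempert, $\gamma_M\equiv\varkappa_M$ and $c_M\equiv k_M$. Moreover, $k_M$ is the integrated form of $\varkappa_M$, and $\varkappa_M$ is continuous because $M$ is taut. Hence $(M,k_M)$ is the length space of the Finsler structure $\varkappa_M=\gamma_M$, and the same holds for $N$.

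Second, by Lemma \ref{lem:connect}, $\gamma_M$ and $\gamma_N$ are $C^{0,1}_{loc}$ on $TM$ and $TN$. Restricted to a fiber, $\gamma_M(p;\cdot)$ is a maximum of seminorms $X\mapsto |f_i'(p)X|/(1-|f_i(p)|^2)$. It is a genuine norm because $c_M$ is a distance and the Lempert condition makes $\gamma_M$ nondegenerate. This supplies the Lipschitz Finsler structures.

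Third, I apply the Matveev--Troyanov result: $F$ is a bijective isometry onto its image. Surjectivity, if needed, follows either from completeness together with a standard argument, or from applying the theorem locally. The conclusion is that $F\in C^{1,1}_{loc}$.

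The main obstacle is the convexity hypothesis. A maximum of finitely many seminorms of the form $|\ell_i(X)|$ is a polyhedral-type norm, which is convex but not strictly convex. The Matveev--Troyanov $C^{1,1}$ regularity theorem, however, typically requires strict convexity, or at least uniqueness of geodesics.

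To overcome this, I would first check whether the version of their theorem used in \cite{Edigarian} needs only locally Lipschitz Finsler metrics whose unit balls have a $C^1$ or strictly convex structure. If it does not, I would try to exploit the Lempert property instead. Through every pair of points there pass complex geodesics, namely holomorphic discs $\varphi$ with $f_i\circ\varphi$ an automorphism of $\mathbb{D}$ for some $i$. Since $F$ maps $k_M$-geodesics to $k_N$-geodesics, I would aim to show that $F$ is differentiable along these discs. I would then recover the full differential from the finitely many directions determined by the $f_i$; this is where having at least three $f_i$ in dimension $2$ would be used.
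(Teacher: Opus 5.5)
Your first three steps are exactly the paper's proof. Lemma~\ref{lem:connect} makes $\gamma_M=\varkappa_M$ a $C^{0,1}_{loc}$ Finsler metric with norms on the fibres, $F$ is a bijective distance isometry, and Matveev--Troyanov's Corollary C with $k=0$, $\alpha=1$ gives $C^{1,1}_{loc}$. You even make explicit a point the paper uses tacitly: $k_M$ is the integrated form of $\varkappa_M=\gamma_M$, so $k_M$ really is the Finsler distance of $\gamma_M$.

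The gap is that you never conclude. You add strict convexity to the hypotheses of the Matveev--Troyanov theorem, call it the main obstacle, and leave the argument conditional on checking it. That hypothesis is not there. Matveev--Troyanov work with a continuous function on $TM$ whose restriction to each tangent space is merely a norm (convex, positively homogeneous, positive on nonzero vectors). Their proof passes to the Binet--Legendre Riemannian metric, obtained by averaging over the unit ball. This metric is defined for an arbitrary convex unit ball, inherits the $C^{k,\alpha}$ regularity of the Finsler metric, and is preserved by isometries; they then use Calabi--Hartman-type regularity of Riemannian isometries. Hence the max-type norms $X\mapsto\max_i|f_i'(p)X|/(1-|f_i(p)|^2)$ are admissible as they stand, and the paper simply applies Corollary C to them. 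With this observation your proposal is complete; without it, it does not prove the statement.

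Your fallback would not fill the gap. First, it uses dimension $2$ and at least three $f_i$, which are hypotheses of Theorem~\ref{thm:main}, not of this proposition. Second, because the fibre norms are not strictly convex, real $k_M$-geodesics are highly non-unique, so knowing that $F$ maps geodesics to geodesics gives little control. Third, differentiability of $F$ along complex geodesics, even if obtained, would give no Lipschitz modulus for $dF$, which is what $C^{1,1}_{loc}$ requires.

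Also, make the surjectivity step precise instead of leaving it ``if needed'', arguing as the paper does. $F$ is continuous and injective, so $F(M)$ is open by invariance of domain. It is isometric to the complete space $(M,k_M)$, hence closed, and connectedness of $N$ gives $F(M)=N$. This step needs $\dim M=\dim N$, which the paper assumes implicitly and which cannot be dropped. For example, $z\mapsto(z,\pi(z))$ from $\mathbb{D}$ to $\mathbb{D}^2$, with $\pi$ the nearest-point retraction of $(\mathbb{D},\rho)$ onto a closed Poincar\'e disc centred at $0$, is a Kobayashi distance isometry that is not $C^1$.
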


\begin{proof}
	Let $F:M\to N$ be Kobayashi distance isometry. Observe that for any $z\in M$ and 
	$X\in T_zM$, the map
	$$X\to \gamma_M(z;X)$$ is always a seminorm. Since $M$ is Taut, it becomes a norm.
	 Now, using Lemma \ref{lem:connect}, 
	$(M,\gamma_M)$ and $(N,\gamma_N)$ are $C_{\text{loc}}^{0,1}$ Finsler manifolds in 
	the sense of Matveev-Trayanov. Since $F$ is an isometry, it is continuous and injective. 
	
	Since it is injective, $F(M)\subseteq N$ is open. Also, $(M,\gamma_{M})$ is complete. Therefore, 
	$F(M)\subset N$ is complete and thus it is closed. This implies that $F(M)=N$ and 
	that it is bijective.  Now, using the result \cite[~Corollary C]{matveev} (for $k=0, \alpha =1$), we obtain that 
	$F$ is $C_{\text{loc}}^{1,1}$.  
\end{proof}

Theorem \ref{thm:main} now follows by combining Proposition \ref{prop:connect} with 
\cite[~Theorem 2]{cha-zwo}.

\begin{proof}[Proof of Theorem \ref{thm:main}]
	Let $M,\,N$ be $2$-dimensional Lempert manifolds with finite Carath\'{e}odory
	 universal sets with atleast three elements. Let $F:M\to N$ be Kobayashi distance
	  isometry. Proposition 
	 \ref{prop:connect} gives us that $F$ is $C_{\text{loc}}^{1,1}$ and hence $C^1$ 
	 in particular. Result \ref{res:cha-zwo} now completes the proof. 
\end{proof}

\end{document}